\DeclareMathOperator{\acc}{acc}
\DeclareMathOperator{\dom}{dom}
\DeclareMathOperator{\otp}{otp}
\DeclareMathOperator{\cf}{cf}
\DeclareMathOperator{\TP}{TP}
\DeclareMathOperator{\ITP}{ITP}
\newcommand{\ZFC}{{\rm ZFC}\xspace}
\newcommand{\PCF}{{\rm PCF}\xspace}
\newcommand{\bS}{\mathbb{S}}
\newcommand{\bT}{\mathbb{T}}
\newtheorem{theorem}{Theorem}
\newaliascnt{example}{theorem}
\newaliascnt{fact}{theorem}
\newaliascnt{corollary}{theorem}
\newaliascnt{lemma}{theorem}
\newtheorem{lemma}[lemma]{Lemma}
\newaliascnt{claim}{theorem}
\newtheorem{claim}[lemma]{Claim}
\theoremstyle{definition}
\newaliascnt{definition}{theorem}
\newtheorem{definition}[definition]{Definition}
\newtheorem*{theorem*}{Theorem}
\newtheorem*{example*}{Example}
\newtheorem*{question*}{Question}
\begin{document}
\title{The strong tree property and weak square}
\author{Yair Hayut}
\author{Spencer Unger}
\thanks{The main ideas for this paper were conceived at the Workshop on High and
Low forcing at the American Institute of Mathematics in January 2016.}

\begin{abstract}
We show that it is consistent, relative to $\omega$ many supercompact cardinals, that the super tree property holds at $\aleph_n$ for all $2 \leq n < \omega$ but there are weak square and a very good scale at $\aleph_{\omega}$. 
\end{abstract}
\maketitle

\section{introduction}

In this paper we consider the strong and super tree properties which
characterize strong and super compactness at inaccessible cardinals.  We show
that certain consequences of supercompactness do not follow from the super tree
property. We begin with some definitions. Let $\kappa \leq \lambda$ be
cardinals with $\kappa$ regular.

\begin{definition}
We define a $(\kappa,\lambda)$-tree to be a sequence $T = \langle T_x \mid x \in
P_\kappa\lambda \rangle$ such that for all $x \in P_\kappa\lambda$:
\begin{enumerate}
\item $T_x$ is a nonempty set of functions from $x$ to $2$ and 
\item for all $y \subseteq x$ and all $f \in T_x$, $f \upharpoonright y \in
T_y$.
\end{enumerate}
\end{definition}

\begin{definition} A $(\kappa,\lambda)$-tree $T$ is \emph{thin} if for all $x
\in P_\kappa\lambda$, $\vert T_x \vert < \kappa$. \end{definition}

\begin{definition} A function $b: \lambda \to 2$ is a \emph{cofinal branch}
through a $(\kappa,\lambda)$-tree $T$ if for all $x \in P_\kappa\lambda$, $b
\upharpoonright x \in T_x$. \end{definition}

\begin{definition} We define two reflection properties:
\begin{enumerate}
\item $\TP(\kappa,\lambda)$ holds if every thin $(\kappa,\lambda)$-tree has a
cofinal branch.
\item $\ITP(\kappa,\lambda)$ holds if for every thin $(\kappa,\lambda)$-tree $T$
and every sequence $\langle d_x \mid x \in P_\kappa\lambda \rangle$ such that
for all $x$, $d_x \in T_x$, there is a cofinal branch $b$ through $T$ such that
$\{ x \mid b \upharpoonright x = d_x \}$ is stationary.
\end{enumerate}
\end{definition}

Note that $\TP(\kappa,\kappa)$ is just the tree property at $\kappa$. We say
that $\kappa$ has the strong tree property if $\TP(\kappa,\lambda)$ holds for
all $\lambda \geq \kappa$ and $\kappa$ has the super tree property if
$\ITP(\kappa,\lambda)$ holds for all $\lambda \geq \kappa$.  The notion of
\emph{thinness} was isolated by Weiss \cite{weiss}.  It allowed for the
reformulation of previous results of Jech \cite{jech} and Magidor
\cite{magidorsc} characterizing strong and super compactness respectively. In
particular an inaccessible cardinal $\kappa$ is strongly compact if and only if
it has the strong tree property and it is supercompact if and only if it has the
super tree property.

In order to state our main theorems, we give some standard definitions.  We
start with the square principles $\square_{\mu,\lambda}$, which were first
defined in \cite{Schimmerling95}.

\begin{definition} Let $\lambda \leq \mu$ be cardinals.  We
define a $\square_{\mu,\lambda}$-sequence and say that $\square_{\mu,\lambda}$
holds if and only if there is a $\square_{\mu,\lambda}$-sequence.  A sequence
$\langle \mathcal{C}_\alpha \mid \alpha < \mu^+ \rangle$ is a
$\square_{\mu,\lambda}$-sequence if 
\begin{enumerate}
\item for all $\alpha < \mu^+$, $1 \leq \vert \mathcal{C}_\alpha \vert \leq
\lambda$,
\item for all $\alpha < \mu^+$ and all $C \in \mathcal{C}_\alpha$, $C$ is club
in $\alpha$ and $\otp(C) \leq \mu$, and
\item for all $\alpha < \mu^+$ and all $C \in \mathcal{C}_\alpha$, if $\beta \in
\acc(C)$, then $C \cap \beta \in \mathcal{C}_\beta$.
\end{enumerate}
\end{definition}
Note that $\square_{\mu,\mu}$ is typically written $\square_\mu^*$ and by a
theorem of Jensen \cite{jensen} it is equivalent to the existence of a special
$\mu^+$-Aronszajn tree.  We will sometimes write $\square_{\mu,<\lambda}$ with
the obvious interpretation.

Next we give some definitions from \PCF theory.  Let $\mu$ be a singular cardinal
of cofinality $\omega$.  Let $\langle \mu_i \mid i < \omega \rangle$ be an
increasing sequence of regular cardinals cofinal in $\mu$.  We define an
ordering on $\prod_{i<\omega}\mu_i$ as follows.  Let $f,g \in
\prod_{i<\omega}\mu_i$ and set $f <^* g$ if and only if there is a $j< \omega$
such that for all $i \geq j$, $f(i) < g(i)$.  We say that $\langle f_\beta \mid
\beta < \mu^+ \rangle$ is a \emph{scale of length $\mu^+$ in $\prod_{i<\omega}
\mu_i$} if it is increasing and cofinal in $\prod_{i<\omega} \mu_i$ under the
$<^*$ ordering. A point $\alpha<\mu^+$ of uncountable cofinality is a
\emph{good point} (respectively \emph{very good}) if there is an unbounded
(respectively club) $A \subseteq \alpha$ such that $\langle f_\beta(i) \mid
\beta \in A \rangle$ is strictly increasing for all large $i$.  If $\alpha$ is
not good, then we say that $\alpha$ is a \emph{bad point}.  A scale $\langle
f_\beta \mid \beta < \mu^+ \rangle$ is \emph{good} (respectively \emph{very
good}) if there is a club $F \subseteq \mu^+$ such that each $\alpha \in F$ with
$\cf(\alpha) > \omega$ is good (respectively very good).   \emph{Bad scales} of
length $\mu^+$ are those which are not good.  In particular a bad scale has
stationarily many bad points.

The following theorem of Magidor and Shelah \cite{magidorshelah} shows that
strongly compact cardinals have some accumulated affect on the universe.

\begin{theorem} Let $\lambda$ be a singular limit of strongly compact cardinals.
Then $\lambda^+$ has the tree property.  \end{theorem}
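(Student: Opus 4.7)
The plan is to argue by contradiction: assume $T$ is a $\lambda^+$-Aronszajn tree and use strong compactness below $\lambda$ to extract a cofinal branch. The only use of the singularity hypothesis is a pigeonhole taming the sizes of the levels. Since $T$ is a $\lambda^+$-tree, $|T_\alpha|\leq\lambda$ for all $\alpha<\lambda^+$; writing $\lambda=\sup_{i<\cf(\lambda)}\mu_i$ as a sup of smaller cardinals, regularity of $\lambda^+$ together with $\cf(\lambda^+)>\cf(\lambda)$ yields some $\mu<\lambda$ and an unbounded $A\subseteq\lambda^+$ with $|T_\alpha|\leq\mu$ for every $\alpha\in A$.

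Next I would fix a strongly compact $\kappa$ with $\mu<\kappa<\lambda$, available because $\lambda$ is a limit of strongly compacts, and use $\lambda^+$-strong compactness of $\kappa$ to produce an elementary $j\colon V\to M$ with $\crit(j)=\kappa$ and $j''\lambda^+\subseteq Y$ for some $Y\in M$ with $|Y|^M<j(\kappa)$. Letting $\delta=\sup j''\lambda^+$, since $j(\lambda^+)$ is regular in $M$ of cofinality exceeding $j(\kappa)>|Y|^M$, we have $\delta<j(\lambda^+)$, so the $\delta$-th level of $j(T)$ is nonempty and we pick some $t$ there.

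For each $\alpha<\lambda^+$ let $t_\alpha$ be the predecessor of $t$ in $j(T)$ at level $j(\alpha)$. For $\alpha\in A$ the critical inequality $|T_\alpha|\leq\mu<\crit(j)$ forces $j(T_\alpha)=j''T_\alpha$, so $t_\alpha=j(s_\alpha)$ for a unique $s_\alpha\in T_\alpha$. Since $t_\alpha<_{j(T)}t_\beta$ for $\alpha<\beta$ in $A$, elementarity gives $s_\alpha<_T s_\beta$, so $\{s_\alpha:\alpha\in A\}$ is a coherent chain in $T$; taking tree predecessors at levels outside $A$ extends it to a cofinal branch through $T$, contradicting $T$ being Aronszajn.

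The main obstacle is the first-paragraph pigeonhole: it is the only place that uses singularity of $\lambda$, and without it the identification $j(T_\alpha)=j''T_\alpha$ on which the whole argument turns would fail. Everything else is a standard packaging of a strongly compact embedding, and requires essentially no computation beyond checking that $\delta$ lies below $j(\lambda^+)$ so that $t$ can be chosen at all.
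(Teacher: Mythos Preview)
The paper does not give a proof of this statement; it is quoted as a theorem of Magidor and Shelah and cited to \cite{magidorshelah}, so there is no argument in the paper to compare yours against. That said, your proposal has a genuine gap.

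The pigeonhole in your first paragraph does not go through. From $|T_\alpha|\le\lambda$ you cannot infer $|T_\alpha|<\lambda$ for unboundedly many $\alpha$: a $\lambda^+$-Aronszajn tree may well have every level of size exactly $\lambda$. For instance, take any $\lambda^+$-Aronszajn tree $T_0$, form the disjoint union of $\lambda$ copies of it, and place a single new root below; the result is $\lambda^+$-Aronszajn and every level above $0$ has cardinality $\lambda$. For such a tree there is no $\mu<\lambda$ and unbounded $A$ as you claim, so you never get $|T_\alpha|<\crit(j)$ on $A$, the key identification $j(T_\alpha)=j''T_\alpha$ fails, and the predecessor $t_\alpha$ need not lie in $j''T$.

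Everything from your second paragraph on is correct and is exactly the right endgame once the levels along $A$ really are below the critical point. What is missing is the reduction to that situation, and this is where the substance of the Magidor--Shelah argument lies. Their proof uses one strongly compact $\kappa_0<\lambda$ together with the singularity of $\lambda$ (via a pigeonhole carried out inside $j_0(\lambda)$, not inside $\lambda$) to extract from $T$ a \emph{narrow system} of some width $\mu<\lambda$ on an unbounded set; only then does a second strongly compact $\kappa_1\in(\mu,\lambda)$ run essentially your embedding argument against this narrow object to produce an actual cofinal branch. The need for two applications of strong compactness rather than one is exactly what your first-paragraph pigeonhole was trying to shortcut.
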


Note that it is consistent that $\kappa$ is supercompact but the tree
property fails for every cardinal above $\kappa$ \cite{bendavidshelah}. This
shows that the fact that $\lambda^+$ has the tree property could not be deduced
only from the existence of a single strongly compact below it, but we had to use
the full power of the cofinal sequence of the strongly compact cardinals.

The following theorem of Shelah shows that supercompact cardinals have an effect
on the \PCF structure above them.

\begin{theorem} If $\kappa$ is supercompact and $\mu > \kappa$ is singular cardinal
of cofinality $\omega$, then all scales of length $\mu^+$ are bad. In particular,
there are no very good scales of length $\mu^+$. \end{theorem}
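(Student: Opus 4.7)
The plan is to show that, for any scale $\vec f = \langle f_\beta \mid \beta < \mu^+\rangle$ in $\prod_n \mu_n$ (with $\mu_n\nearrow\mu$ and, without loss of generality, each $\mu_n > \kappa$ regular), the set of bad points is stationary in $\mu^+$; this makes every scale bad and in particular rules out very good scales. Given an arbitrary club $C \subseteq \mu^+$, I will find a bad point inside $C$. Fix $j\colon V \to M$ witnessing $\mu^+$-supercompactness of $\kappa$, so that $\crit(j) = \kappa$, $j(\kappa) > \mu^+$, $M^{\mu^+} \subseteq M$, and $j\image\mu^+ \in M$. Set $\sigma := \sup j\image\mu^+$; closure gives $\sigma < j(\mu^+)$ and $\cof^M(\sigma) = \mu^+$, and $\sigma \in j(C)$ because $j(C)$ is closed in $M$ with $j\image C$ cofinal in $\sigma$. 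The key claim is that $\sigma$ is a bad point for $j(\vec f)$ in $M$; by elementarity this yields a bad point of $\vec f$ in $C$.

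To prove the claim, I identify the $<^*$-exact upper bound of $j(\vec f) \upharpoonright \sigma$ in $M$ as the function $\chi \in (\prod_n j(\mu_n))^M$ given by $\chi(n) := \sup j\image\mu_n$. That $\chi$ is an upper bound is direct: for any $\gamma < \sigma$, choose $\beta < \mu^+$ with $\gamma \leq j(\beta)$; then $j(\vec f)_\gamma \leq^* j(f_\beta)$, and $j(f_\beta)(n) = j(f_\beta(n)) \in j\image\mu_n$ lies pointwise below $\chi(n)$. For exactness, given $h \in M$ with $h <^* \chi$, for each large $n$ pick $\epsilon(n) < \mu_n$ with $h(n) < j(\epsilon(n))$; since $M \subseteq V$ and $\epsilon\colon\omega\to\Ord$ is absolute, we have $\epsilon \in V \cap \prod_n \mu_n$, so the cofinality of $\vec f$ in $V$ produces $\beta < \mu^+$ with $\epsilon <^* f_\beta$, giving $h <^* j(f_\beta) = j(\vec f)_{j(\beta)}$ with $j(\beta) < \sigma$.

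Once $\chi$ is identified as the eub, the contradiction is immediate: $j\image\mu_n$ has order type $\mu_n$ and is cofinal in $\chi(n)$, so $\cof^M(\chi(n)) = \mu_n < \mu^+$ for every $n$; however, the standard \PCF characterisation of good points states that a good point of cofinality $\tau > \omega$ admits an eub $g$ with $\cof(g(n)) = \tau$ for all but finitely many $n$. Applied inside $M$ to $\sigma$, goodness of $\sigma$ would force $\cof^M(\chi(n)) = \mu^+$ eventually, contradicting $\cof^M(\chi(n)) = \mu_n$. Hence $\sigma$ is bad. The main obstacle I anticipate is the exactness verification for $\chi$, specifically arguing carefully that the sequence $\epsilon$ defined in $M$ genuinely lives in $V$ and invoking the scale property cleanly; once this is in place, the appeal to the eub-cofinality characterisation from \PCF theory is routine.
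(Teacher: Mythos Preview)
The paper does not actually prove this theorem; it is quoted in the introduction as a background result of Shelah, used only to motivate the main theorem, and no argument is supplied. So there is nothing in the paper to compare your proof against.

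That said, your argument is the standard one and is correct. Using a $\mu^+$-supercompactness embedding $j$, one checks that $\sigma=\sup j\image\mu^+$ lies in $j(C)$, that $\chi(n)=\sup j\image\mu_n$ is an exact upper bound of $j(\vec f)\upharpoonright\sigma$ in $M$, and that $\cf^M(\chi(n))=\mu_n\neq\mu^+=\cf^M(\sigma)$ violates the eub characterisation of goodness. Your exactness verification is fine: since $h\in M\subseteq V$ and $j\in V$, the choice function $\epsilon$ is constructed in $V$, cofinality of the scale gives $\epsilon<^*f_\beta$, and the resulting inequality $h<^*j(f_\beta)$ is absolute between $V$ and $M$. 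The only small points worth making explicit in a final write-up are that $\chi\in M$ (because $j\image\mu_n\in M$ by closure and $\chi$ is an $\omega$-sequence) and that eubs are unique modulo $=^*$, so the cofinality obstruction genuinely blocks goodness.
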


In this paper, we show that it is not possible to replace the large cardinal
assumptions in the above theorems with the super tree property.

\begin{theorem}
It is consistent relative to $\omega$ many supercompact cardinals that for $2
\leq n <\omega$ the super tree property holds at $\aleph_n$ and
$\square^*_{\aleph_{\omega}}$. Similarly it is consistent that the super tree
property holds at each $\aleph_n$ for $2 \leq n < \omega$ and there is a very
good scale of length $\aleph_{\omega+1}$.
\end{theorem}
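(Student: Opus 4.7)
The plan is to build on the theorem that from $\omega$ supercompact cardinals one can force the super tree property at each $\aleph_n$ for $2 \le n < \omega$, itself established by combining ideas of Mitchell, Abraham, Cummings--Foreman, Magidor--Shelah, and more recently Neeman. Fix supercompacts $\kappa_0 < \kappa_1 < \cdots$ with $\kappa = \sup_n \kappa_n$, make each $\kappa_n$ indestructible under the relevant directed-closed forcings via a Laver preparation, and then force with an iterated Mitchell-style collapse $\bP$ turning each $\kappa_n$ into $\aleph_n$. In $V^{\bP}$, $\kappa$ becomes $\aleph_\omega$ and $\ITP(\aleph_n,\lambda)$ holds for every $n \ge 2$ and every $\lambda \ge \aleph_n$.

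To obtain the first conclusion, force over $V^{\bP}$ with the standard approximation poset $\bS$ whose conditions are initial segments, of length $<\aleph_{\omega+1}$, of a would-be $\square^*_{\aleph_\omega}$-sequence, ordered by end-extension. The poset $\bS$ is ${<}\aleph_{\omega+1}$-strategically closed, so it preserves all cardinals and cofinalities up to and including $\aleph_{\omega+1}$; a routine density argument shows that the generic assembles a $\square^*_{\aleph_\omega}$-sequence. For the second conclusion, in place of $\bS$ we use an analogous approximation poset $\bT$ adding a very good scale of length $\aleph_{\omega+1}$ along a chosen sequence $\tup{\mu_i \mid i < \omega}$ cofinal in $\aleph_\omega$; $\bT$ enjoys the same closure as $\bS$.

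The main obstacle is to verify that $\ITP(\aleph_n,\lambda)$ survives in $V^{\bP * \bS}$ (and similarly for $\bT$). Strategic closure handles the case $\lambda < \aleph_{\omega+1}$: $\bS$ adds no new thin $(\aleph_n,\lambda)$-tree and no new level-marker sequence $\tup{d_x}$, so $\ITP$ in $V^{\bP}$ supplies the required branch directly. For $\lambda \ge \aleph_{\omega+1}$ one reruns the lifting argument that gave $\ITP$ in $V^{\bP}$, but now through $\bP * \bS$ simultaneously: start from $j\colon V \to M$ witnessing $\lambda$-supercompactness of $\kappa_n$, build a master condition in $j(\bP) * j(\bS)$ by exploiting the closure of the tail forcings in $M$, lift $j$ to $V^{\bP * \bS}$, and extract the desired cofinal branch together with a stationary matching set from the seed $j\image \lambda$ of the generic ultrapower.

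The delicate step is to thread $\bS$ into the lifting scheme already used for $\bP$: one must show $j(\bS)$ remains sufficiently closed in $M$ after lifting through $j(\bP)$, so that a master condition survives in the tail. This is tractable because $\bS$ has size $\aleph_{\omega+1}$ and its image under $j$ sits inside the highly closed portion of $j(\bP)$ above $\kappa_n$, where the standard absorption and term-forcing techniques apply uniformly to both the collapse and the square (or scale) forcing.
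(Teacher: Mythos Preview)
Your outline has a genuine gap at the step you yourself call ``delicate'': building a master condition for $j(\bS)$. Write $\delta=\sup j``\aleph_{\omega+1}$. The union $\bigcup j``G_{\bS}$ is a partial $\square^*$-sequence of length $\delta$, and to extend it to a condition in $j(\bS)$ you must supply a nonempty $\mathcal{C}_\delta$, i.e.\ a club $C\subseteq\delta$ of order type at most $j(\aleph_\omega)$ with $C\cap\beta\in\mathcal{C}_\beta$ for every $\beta\in\acc C$. That is exactly a thread through the image of the generic square sequence, and no amount of closure of $j(\bS)$ or smallness of $\bS$ produces one: the generic for $\bS$ was not built to admit a thread, and $j``\aleph_{\omega+1}$ is not closed in $\delta$ (continuity of $j$ fails at points of cofinality $\geq\kappa_n$), so it cannot serve as one either. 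Your final sentence, that $j(\bS)$ ``sits inside the highly closed portion of $j(\bP)$'', conflates two different forcings: $\bS$ is defined in $V^{\bP}$, not in $V$, and is not absorbed by the tail of the collapse iteration. The same obstruction arises for the very-good-scale forcing: a master condition at $\delta$ would require a club witnessing very-goodness of $\delta$ for the image of the generic scale, and again none is on hand.

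The paper sidesteps this by introducing, for each $n$, a threading forcing $\bT_n$ such that $\bS*\bT_n$ has an $\aleph_{n+2}$-directed closed dense subset. One first proves (this is the paper's indestructibility lemma) that in the Cummings--Foreman model $W$ the super tree property at $\aleph_{n+2}$ survives any $\aleph_{n+2}$-directed closed forcing; hence $\ITP(\aleph_{n+2},\lambda)$ holds in $W[\bS*\bT_n]$ for every $\lambda$. One then shows separately, by a splitting argument exploiting that $\aleph_{n+2}$ is not strong limit, that $\bT_n$ over $W[\bS]$ cannot add a new cofinal branch to a thin $(\aleph_{n+2},\lambda)$-tree; so the branch found in $W[\bS*\bT_n]$ already lies in $W[\bS]$. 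This detour through the threading forcing is precisely what replaces the missing master condition. Incidentally, your case split at $\lambda<\aleph_{\omega+1}$ is unnecessary (and not quite correct as stated, since $|P_{\aleph_n}\lambda|$ can exceed $\aleph_\omega$ even for $\lambda\leq\aleph_\omega$, so distributivity of $\bS$ does not immediately put the tree in the ground model); the threading argument handles all $\lambda$ uniformly.
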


In fact we get the consistency of $\square_{\aleph_\omega, < \aleph_{\omega}}$
together with the super tree property at every $\aleph_n$ with $n\geq 2$. Note
that this is the strongest possible square at this scenario, since the super
tree property at $\aleph_n$ implies the failure of $\square_{\lambda, <
\aleph_n}$ for all $\lambda \geq \aleph_n$ by a theorem of Weiss \cite{weiss}.

The theorems of this paper can be seen as extensions of work of the second
author \cite{ungercf} who showed that the super tree property at the
$\aleph_n$'s is consistent with the combinatorial principle $\aleph_{\omega+1}
\in I[\aleph_{\omega+1}]$, which implies that all scales of length
$\aleph_{\omega+1}$ are good and is a weakening of $\square_{\aleph_\omega}^*$.

Throughout the paper we work in \ZFC. Any large cardinals assumption will be specified. Our terminology is mostly standard. We denote by $V[\mathbb{P}]$ the generic extension of the model $V$ by a generic filter for $\mathbb{P}$. We write "$V[\mathbb{P}]\models\phi$" for the assertion "$V\models\Vdash_{\mathbb{P}} \phi$".   
\section{main theorem}

Towards the proof of the main theorem we need the following lemma.

\begin{lemma} \label{indestructible}
Let $\langle \kappa_n \mid n < \omega \rangle$ be an increasing sequence of
supercompact cardinals. There is a forcing extension in which for all
$n<\omega$, $\kappa_n = \aleph_{n+2}$, the super tree property holds at
$\aleph_{n+2}$ and it is indestructible under any $\aleph_{n+2}$-directed closed
forcing.
\end{lemma}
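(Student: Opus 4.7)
The plan is to combine a Laver-style preparation with a Mitchell-style collapsing iteration, in the spirit of Cummings--Foreman and of the second author \cite{ungercf}. We first carry out a preliminary preparation: by iterating Laver's theorem along the sequence $\langle \kappa_n \mid n<\omega\rangle$ (each successive preparation is sufficiently directed closed not to disturb indestructibility already achieved below it), we pass to an intermediate model in which every $\kappa_n$ remains supercompact and is indestructible under $\kappa_n$-directed closed forcing.

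Over the prepared model, define an Easton-support iteration $\mathbb{P} = \mathbb{P}_0 \ast \dot{\mathbb{P}}_1 \ast \dot{\mathbb{P}}_2 \ast \cdots$, where each $\mathbb{P}_n$ is a Mitchell-style poset at $\kappa_n$ of the form $\Add(\kappa_{n-1},\kappa_n) \ast \Col(\kappa_{n-1},<\kappa_n)$ (with $\kappa_{-1}=\omega$). The iteration is arranged so that it factors as $\mathbb{P}_{\leq n} \times \mathbb{P}_{>n}$ with $|\mathbb{P}_{\leq n}|\leq \kappa_n$, the first factor absorbing (via a term-forcing analysis) into a $\kappa_n$-cc poset, and the tail $\mathbb{P}_{>n}$ being $\kappa_n$-directed closed. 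Standard computations then give $\kappa_n = \aleph_{n+2}$ in $V[\mathbb{P}]$.

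To prove the super tree property and its indestructibility at $\aleph_{n+2}$, fix $n$, a name $\dot{\mathbb{Q}}$ for an $\aleph_{n+2}$-directed closed forcing in $V^{\mathbb{P}}$, a large $\lambda$, and in $V[\mathbb{P}][\mathbb{Q}]$ a thin $(\kappa_n,\lambda)$-tree $T$ with labeling $\langle d_x \mid x \in P_{\kappa_n}\lambda\rangle$. Since $\mathbb{P}_{>n} \ast \dot{\mathbb{Q}}$ is $\kappa_n$-directed closed over $V[\mathbb{P}_{\leq n}]$, indestructibility provides a $\lambda$-supercompact embedding $j \colon V \to M$ with $\crit(j) = \kappa_n$ which can be lifted to $j^\ast \colon V[\mathbb{P}][\mathbb{Q}] \to M^\ast$: the lift through $\mathbb{P}_{\leq n}$ is automatic (since $j$ is the identity on those conditions), and the lift through $\mathbb{P}_{>n} \ast \dot{\mathbb{Q}}$ uses a master condition obtained from directed closure in $M$, noting that the pointwise image of the generic is a directed set of size $<j(\kappa_n)$. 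Evaluating $j^\ast(T)$ at $j''\lambda \in M$ produces a function $\tilde b \colon \lambda \to 2$ whose restriction to every $x \in P_{\kappa_n}\lambda$ lies in $T_x$; the equality $j^\ast(\tilde b) \upharpoonright j''\lambda = j^\ast(d)_{j''\lambda}$ in $M^\ast$ then reflects to the stationary set required by $\ITP(\kappa_n,\lambda)$.

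The main obstacle is that $\mathbb{P}_{\leq n}$ is not $\kappa_n$-directed closed, so one must verify that the branch $\tilde b$ extracted in $M^\ast$ actually lies in $V[\mathbb{P}][\mathbb{Q}]$ and not merely in $M^\ast$. This is handled by a branch-lemma argument exploiting (i) the thinness of $T$, (ii) the $\kappa_n$-cc of the ``small'' factor after absorption, and (iii) the fact that the tail forcing, being highly closed, cannot add a new cofinal branch to a thin tree. Choosing the $\Add$ and $\Col$ factors in $\mathbb{P}_n$ with the correct chain condition and term-forcing structure---as in the Cummings--Foreman template used in \cite{ungercf}---is precisely what makes the branch lemma and the lifting of $j$ compatible.
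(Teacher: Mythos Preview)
Your outline has the right overall shape, but there is a genuine gap in the lifting argument, and it is exactly the step you label ``automatic''. You write that ``the lift through $\mathbb{P}_{\leq n}$ is automatic (since $j$ is the identity on those conditions)''. This is false: the last factor $\mathbb{P}_n = \Add(\kappa_{n-1},\kappa_n)\ast\Col(\kappa_{n-1},<\kappa_n)$ has size $\kappa_n=\crit(j)$, so $j$ does move it; in particular $j(\Add(\kappa_{n-1},\kappa_n))=\Add(\kappa_{n-1},j(\kappa_n))^M$, and to lift $j$ you must build an $M$-generic for the long tail of extra Cohen reals (and the extra collapse) over $M[\mathbb{P}_{\leq n}]$. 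A prior Laver preparation of $\kappa_n$ in $V$ does not supply this, since indestructibility applies only to $\kappa_n$-directed closed forcing over $V$, while $\mathbb{P}_{\leq n}$ is not $\kappa_n$-directed closed and the needed quotient $j(\mathbb{P}_n)/\mathbb{P}_n$ lives over $V[\mathbb{P}_{\leq n}]$, not over $V$. In the Cummings--Foreman template this is \emph{the} technical point: the Laver function is woven into the iteration so that $j(\mathbb{R}_{n+1})$ absorbs the tail $\mathbb{R}_\omega/\mathbb{R}_{n+1}$ (and hence a generic for the missing piece of $j(\mathbb{P}_n)$ can be produced inside $M$). Separating the Laver preparation from the Mitchell iteration, as you do, removes precisely this mechanism.

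The paper's proof is organized differently and avoids the gap. It works directly with the Cummings--Foreman iteration $\mathbb{R}_\omega$ (where the Laver function is built in), and absorbs the extra $\aleph_{n+2}$-directed closed forcing $\mathbb{X}$ by passing, in $V[\mathbb{R}_{n+1}]$, to the \emph{term forcing} $\mathbb{A}_{\mathbb{X}}=\mathcal{A}(\mathbb{X},\mathbb{R}_\omega/\mathbb{R}_{n+1})$, which is $\kappa_n$-directed closed there. One then arranges that $j(F)(\kappa_n)$ anticipates $\mathbb{A}_{\mathbb{X}}$, so that the generic embedding of \cite{ungercf} lifts with domain $V[\mathbb{R}_\omega][\mathbb{A}_{\mathbb{X}}]$; this is what replaces your ``automatic'' lift. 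The ineffable branch $b$ is then seen to lie in an extension of $V[\mathbb{R}_\omega][\mathbb{X}]$ by a product of quotient forcings $\mathbb{S}_{\mathbb{X}}=\mathcal{S}(\mathbb{X},\mathbb{R}_\omega/\mathbb{R}_{n+1})$ together with the $\mathbb{S}$-forcings of \cite[Lemma~4.2]{ungercf}, and one checks that $\mathbb{S}_{\mathbb{X}}$ is $\aleph_{n+1}$-closed and ${<}\aleph_{n+2}$-distributive over $M[\mathbb{R}_\omega][\mathbb{X}]$, so that the branch lemma pushes $b$ down to $V[\mathbb{R}_\omega][\mathbb{X}]$. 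Your final paragraph correctly identifies that a branch lemma is needed, but you locate the difficulty only on the branch side; the missing idea is the term-forcing absorption of $\mathbb{X}$ into the Laver-guessed tail, which is what makes both the lift and the branch argument go through.
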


To prove this we repeat the argument from Theorem 7.5 of \cite{ungercf} in the
presence of this extra $\aleph_n$-directed closed forcing.  In particular the
conclusion of the lemma holds in Cummings and Foreman's \cite{cf} model for the
tree property at $\aleph_n$ for all $n \geq 2$.  We will follow the notation of
\cite{ungercf} closely.  The interested reader is advised to have a copy of it
on hand.  The less interested reader can take the lemma as a black box.

\begin{proof} Let $\mathbb{R}_\omega$ be the Cummings-Foreman iteration defined
from the sequence $\langle \kappa_n \mid n < \omega \rangle$.
In the extension by $\mathbb{R}_\omega$, let $\mathbb{X}$ be
$\aleph_{n+2}$-directed closed.  Working in $V[\mathbb{R}_{n+1}]$, we let
$\mathbb{A}_\mathbb{X} =
\mathcal{A}(\mathbb{X},\mathbb{R}_\omega/\mathbb{R}_{n+1})$ be the forcing of
$\mathbb{R}_\omega/\mathbb{R}_{n+1}$-terms for elements of $\mathbb{X}$.
Clearly $\mathbb{A}_\mathbb{X}$ is $\kappa_n$-directed closed in
$V[\mathbb{R}_{n+1}]$.  By increasing the
amount of supercompactness if necessary we can find a generic embedding with
critical point $\kappa_n$ and domain
$V[\mathbb{R}_\omega][\mathbb{A}_\mathbb{X}]$ using the argument from Section 3
of \cite{ungercf}.  We do this by incorporating $\mathbb{A}_\mathbb{X}$ into the
name returned by $j(F)(\kappa_n)$ where $F$ is the Laver function.

We fix a thin $(\aleph_{n+2},\lambda)$-tree $T$ and a sequence $\langle d_x \mid
x  \in P_\kappa\lambda \rangle$ such that for all $x$, $d_x \in T_x$.  Using the
generic embedding, we have a cofinal branch $b:\lambda \to 2$ through $T$ such
that the set $\{x \mid b \upharpoonright x = d_x \}$ is stationary.  By the
analogs of Lemmas 4.1 and 4.2 for our embedding, we have that $b$ is in the
extension of $V[\mathbb{R}_\omega][\mathbb{X}]$ by the product of
$\mathbb{S}_\mathbb{X} =
\mathcal{S}(\mathbb{X},\mathbb{R}_\omega/\mathbb{R}_{n+1})$ (a quotient forcing
defined from $\mathbb{A}_\mathbb{X}$) and the forcing from Lemma 4.2 of
\cite{ungercf}.

It remains to show that this forcing cannot have added the branch.  To do this
we just incorporate $\mathbb{S}_\mathbb{X}$ with the other $\mathbb{S}$ forcings
from Lemma 4.2 of \cite{ungercf}.  In particular we show that $\mathbb{S}_X$ is
$\aleph_{n+1}$-closed and $<\aleph_{n+2}$-distributive over
$M[\mathbb{R}_\omega][\mathbb{X}]$.  The closure is immediate from Lemma 2.12 of
\cite{ungercf} and the fact that $\mathbb{R}_\omega/\mathbb{R}_{n+1}$ is
$<\aleph_{n+1}$-distributive in $V[\mathbb{R}_{n+1}]$.  The distributivity is
immediate from the $\aleph_{n+2}$-directed closure of $\mathbb{A}_\mathbb{X}$ in
$M_n= V[\mathbb{R}_{n+1}]$ and the end of the proof of Lemma 4.4 on
\cite{ungercf}.  This finishes the proof. \end{proof}

Let us define next the forcing notions for adding and threading weak square as well as forcing notions for adding and threading a very good scale.

\begin{definition}
Let $\bS$ be the forcing notion for adding $\square_{\mu, <\mu}$ using bounded approximations. A condition in $\bS$ is a sequence of the form $\langle \mathcal{C}_\alpha \mid \alpha \leq \gamma\rangle$ where:
\begin{enumerate}
\item $\gamma < \mu^+$.
\item $0 < |\mathcal{C}_\alpha| < \mu$ for all limit $\alpha \leq \gamma$.
\item Every $C\in\mathcal{C}_\alpha$ is closed unbounded subset of $\alpha$ with
$\otp(C)\leq\mu$.
\item If $\beta\in\acc C$, $C\in\mathcal{C}_\alpha$ then $C\cap \beta\in\mathcal{C}_\beta$.
\end{enumerate}

We order $\bS$ by end extension.
\end{definition}
The generic filter for $\bS$ is a $\square_{\mu,<\mu}$-sequence. 
For such sequences $\mathcal{C}$, we define the threading forcing $\bT_\rho$. The elements of $\bT$ are members of $\mathcal{C}_\alpha$ for some $\alpha < \mu^+$, with order type $<\rho$, ordered by end extension. 

The following fact is standard:
\begin{claim}
$\bS\ast\bT_\rho$ has a $\rho$-directed closed dense subset.
\end{claim}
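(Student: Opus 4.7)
The plan is to exhibit a dense subset $D \subseteq \bS \ast \bT_\rho$ that is $\rho$-directed closed under the inherited ordering. Define
\[
D = \bigl\{(s, \check{t}) : \gamma = \operatorname{top}(s) \text{ is a limit},\ t \in \mathcal{C}_{\gamma}^{s},\ \otp(t) < \rho\bigr\},
\]
so that in conditions of $D$ the thread is decided to be a specific club living at the current top level of the bounded approximation to the $\square$-sequence.

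To check density, begin with $(s_0, \dot{t}_0)$ and extend $s_0$ to $s_1$ deciding $\dot{t}_0 = \check{t}_1$ for some concrete $t_1 \in \mathcal{C}_{\beta_1}^{s_1}$ with $\otp(t_1) < \rho$, extending further if needed so that $\beta_1 \leq \operatorname{top}(s_1) =: \gamma_1$. Attach a new top at $\gamma_2 = \gamma_1 + \omega$, leaving the intermediate successor levels trivial, and set $\mathcal{C}_{\gamma_2}^{s_2} = \{t_2\}$ where
\[
t_2 = t_1 \cup \{\beta_1, \gamma_1\} \cup \{\gamma_1 + n : 0 < n < \omega\}.
\]
Then $t_2$ is a club in $\gamma_2$ end-extending $t_1$; its accumulation points below $\gamma_2$ are $\acc(t_1) \cup \{\beta_1\}$, and coherence holds since $t_2 \cap \beta_1 = t_1 \in \mathcal{C}_{\beta_1}^{s_1}$ while $t_2 \cap \beta = t_1 \cap \beta \in \mathcal{C}_\beta^{s_1}$ for $\beta \in \acc(t_1)$ by coherence of $t_1$ itself. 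As $\rho$ is uncountable, $\otp(t_2) = \otp(t_1) + \omega < \rho$. Hence $(s_2, \check{t}_2) \in D$ refines $(s_0, \dot{t}_0)$.

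For $\rho$-directed closure, let $\{(s_i, \check{t}_i) : i < \delta\} \subseteq D$ be directed with $\delta < \rho$. Compatible conditions in an end-extension order are comparable, so the family is linearly ordered in both coordinates; the successor case being trivial, assume $\delta$ is a limit and $\gamma_i := \operatorname{top}(s_i)$ is strictly increasing. Put $\gamma^* = \sup_i \gamma_i$ and $t^* = \bigcup_i t_i$. One checks that $t^* \cap \gamma_i = t_i$ and $\gamma_i \in \acc(t^*)$ for every $i$, so $t^*$ is a club in $\gamma^*$, and $\otp(t^*) = \sup_i \otp(t_i) < \rho$ by regularity of $\rho$. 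Define $s^*$ with top $\gamma^*$ by pasting the $s_i$ at levels below $\gamma^*$ and setting $\mathcal{C}_{\gamma^*}^{s^*} = \{t^*\}$. Coherence at $\gamma^*$: for $\beta \in \acc(t^*) \cap \gamma^*$, choose $i$ with $\beta \leq \gamma_i$; then $t^* \cap \beta$ equals either $t_i \cap \beta$ (if $\beta < \gamma_i$, coherence inherited from $t_i$) or $t_i$ (if $\beta = \gamma_i$, in which case $t_i \in \mathcal{C}_{\gamma_i}^{s_i}$), both lying in $\mathcal{C}_\beta^{s^*}$. Thus $(s^*, \check{t}^*) \in D$ is a common refinement.

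The only real content is the coherence book-keeping at the freshly attached top levels in both the density and closure constructions; once one commits to the definition of $D$ above, everything else is purely formal.
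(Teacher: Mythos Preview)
Your proof is correct and follows the standard argument; the paper does not actually prove this claim, merely labeling it ``standard.'' Your construction of $D$ and the verification of density and $\rho$-directed closure exactly parallel the paper's explicit proof of the analogous lemma for $\bS_{sc}\ast\bT_{sc,n}$, so there is nothing to compare.
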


It follows that forcing with $\mathbb{S}$ preserves cardinals up to $\mu^+$.
Let us define now a forcing for adding a very good scale at $\mu^+$ and the
corresponding threading forcing.

\begin{definition}
Let $\mu$ be a singular cardinal of countable cofinality, and let $\langle \mu_n
\mid n < \omega\rangle$ be an increasing sequence of regular cardinals cofinal
at $\mu$.

The forcing notion $\bS_{sc}$ is the forcing for adding a very good scale using bounded approximations. A condition in $\bS_{sc}$ is a pair $\langle d, s\rangle$ where:
\begin{enumerate}
\item $s = \langle g_\alpha \mid \alpha \leq \gamma\rangle$, $\gamma < \mu^+$,
where $g_\alpha \in \prod_{n < \omega}\mu_n$, increasing modulo finite. 
\item $d\subseteq \gamma + 1$ is closed set of very good points for $s$.
\end{enumerate}
We order $\mathbb{S}_{sc}$ by end extension.
\end{definition}

It is straightforward to see that this forcing adds a very good scale of length
$\mu^+$. Similarly to the square forcing, there is a natural threading forcing
for $\bS_{sc}$. For $n < \omega$, let $\bT_{sc, n}$ be the forcing notion for
adding a club $E$ in $\mu^+$ such that for every $\alpha < \beta$ in $E$ and
$m\geq n$, $g_\alpha(m) < g_\beta(m)$ with approximations of ordertype at most
$\mu_n$ ordered by end extension.

\begin{lemma}
For $n<\omega$, $\bS_{sc}\ast\bT_{sc, n}$ has a $\mu_n$-directed closed dense
subset.
\end{lemma}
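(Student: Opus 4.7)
The plan is to mirror the standard argument used for the square-threading claim above. Let $D \subseteq \bS_{sc}\ast\bT_{sc,n}$ consist of pairs $(p, \check e)$ where $p = (d,s)$ has $\max\dom s = \gamma$, the decided thread $e$ is closed in $\gamma+1$ with $\max e = \gamma$ and $\otp e < \mu_n$, and the strict-dominance property $g_\alpha(m) < g_\beta(m)$ holds for $\alpha < \beta$ in $e$ and $m \geq n$. First I would check density: given $(p,\dot q)$, extend $p$ to decide $\dot q = \check e_0$, and then extend the scale approximation by one step by adjoining a new top function $g_{\gamma+1}$ dominating each $g_\alpha$, $\alpha \in e_0$, on every coordinate $m \geq n$. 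Such a $g_{\gamma+1}$ exists coordinate by coordinate: since $\otp e_0 < \mu_n \leq \mu_m$ and $\mu_m$ is regular, the requisite supremum is below $\mu_m$. Adjoining $\gamma+1$ to $e_0$ then produces a condition in $D$ extending $(p,\dot q)$.

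For $\mu_n$-directed closure, I would first observe that both $\bS_{sc}$ and $\bT_{sc,n}$ are end-extension forcings, so compatibility coincides with comparability and a $<\mu_n$-directed family in $D$ is in fact a chain $\{(p_i,\check e_i) \mid i < \delta\}$ with $\delta < \mu_n$. Writing $p_i = (d_i,s_i)$ with $\dom s_i = \gamma_i+1$, let $\gamma_\infty = \sup_i \gamma_i$ and form the unions $s_\infty = \bigcup_i s_i$, $e_\infty = \bigcup_i e_i$, $d_\infty = \bigcup_i d_i$. By regularity of $\mu_n$, $\otp e_\infty = \sup_i \otp e_i < \mu_n$, and hence $|e_\infty| < \mu_n \leq \mu_m$ for every $m \geq n$. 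Define $g_{\gamma_\infty}(m) = (\sup_{\alpha \in e_\infty} g_\alpha(m)) + 1$ for $m \geq n$ (which is below $\mu_m$ by regularity) and $g_{\gamma_\infty}(m) = 0$ otherwise. Since each $\gamma_i = \max e_i$ lies in $e_\infty$, we get $g_\alpha <^* g_{\gamma_i} <^* g_{\gamma_\infty}$ for every $\alpha \leq \gamma_i$, so $g_{\gamma_\infty}$ is an admissible new top of the scale. Take $e = e_\infty \cup \{\gamma_\infty\}$, which has $\otp e = \otp e_\infty + 1 < \mu_n$ and remains a valid thread because $e_\infty$ already is.

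The main subtlety is the choice of the $d$-component of the upper bound. When $\cf \delta > \omega$, $\gamma_\infty$ has uncountable cofinality and $e_\infty$ is a club in $\gamma_\infty$ on which the scale is strictly increasing above coordinate $n$; hence $\gamma_\infty$ is very good, and we take $d = d_\infty \cup \{\gamma_\infty\}$. The hard part is the case $\cf \delta = \omega$: here $\gamma_\infty$ has countable cofinality and cannot be made very good, so we must set $d = d_\infty$ and verify that this remains a legitimate side condition inside $\gamma_\infty+1$; this relies on the interpretation of ``closed set of very good points'' in the definition of $\bS_{sc}$ whereby limit points of $d$ of countable cofinality need not themselves belong to $d$. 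Once this point is settled, $(p,\check e)$ with $p = (d,s)$ is an upper bound for the chain inside $D$, completing the argument.
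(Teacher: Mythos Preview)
Your outline follows the paper's approach closely, but the density step has a real gap. When you write ``extend $p$ to decide $\dot q = \check e_0$'' you are assuming that a single $\bS_{sc}$-condition can decide the entire thread $\dot q$, which is a set of size up to $\mu_n$. That requires $\bS_{sc}$ to be at least $\mu_n$-distributive, and this is \emph{not} automatic: an arbitrary descending $<\mu_n$-sequence of $\bS_{sc}$-conditions need not have a lower bound, because the union of the $d$-components may fail to be closed (its supremum can fail to be very good for the union of the scales). The paper deals with this by first proving, as a separate claim, that $\bS_{sc}$ is $<\mu$-strategically closed: the good player always extends so that $\max d = \max\dom s$ and chooses the new top function to dominate all previous tops on coordinates $\geq n$ for a fixed $n$ with $\rho<\mu_n$; at limits the sequence of tops is then a club witnessing goodness of the new supremum. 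Strategic closure yields $<\mu$-distributivity (hence, since $\mu$ is singular, $<\mu^{+}$-distributivity), so every $\bS_{sc}$-name for a thread is in fact a check name below some condition, and your density argument then goes through. You need to insert this strategic-closure argument.

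On the countable-cofinality worry at the closure step: the paper handles this differently and more simply. Its dense set requires $\max d = \max c = \max\dom s$, and at a limit of length $\delta<\mu_n$ it adds $\gamma_\infty$ to \emph{both} $d$ and $c$, regardless of $\cf\delta$; the club $\bigcup_i c_i$ witnesses that $\gamma_\infty$ is a good point for the extended scale. In other words, in the context of $\bS_{sc}$ the paper reads ``very good point for $s$'' as ``there is a club below it on which the scale is eventually pointwise increasing'', with no cofinality restriction. Your proposed fix---reinterpreting ``closed'' so that limit points of $d$ of countable cofinality need not lie in $d$---is not what the paper does and conflicts with the literal requirement that $d$ be closed. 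If you align $\max d$ with the top of the condition (as the paper does) the case split on $\cf\delta$ disappears.
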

\begin{proof} Let us show first that $\bS_{sc}$ is $<\mu^+$-distributive.
\begin{claim}\label{claim:Sscale strategically closed}
$\bS_{sc}$ is $<\mu$-strategically closed. 
\end{claim}
\begin{proof}
Let us define for an ordinal $\rho < \mu$, a winning strategy for the generic game of length $\rho$. Let us pick $n$ such that $\rho < \mu_n$. Assume that we played the first $\beta$ steps in the game and let $\langle p_\alpha \mid \alpha < \beta\rangle$ be the play so far. Let us denote $p_\alpha = \langle d_\alpha, s_\alpha\rangle$ and let $\gamma_\alpha = \max \dom s_\alpha$, $\langle f_i \mid i < \sup_\alpha \gamma_\alpha\rangle = \bigcup_{\alpha < \beta} s_\alpha$, the scale constructed so far. Let $d = \bigcup_{\alpha < \beta} d_\alpha$. 

The strategy will be to pick $p_\beta = \langle e, \langle f_i \mid i < \sup \gamma_\alpha\rangle^{\smallfrown} \langle g\rangle \rangle$ where:
\begin{enumerate}
\item $e = d$ if $\beta$ is not a limit ordinal and otherwise $e = d\cup\{\gamma_\beta\}$, $\gamma_\beta = \sup_{\alpha < \beta} \gamma_\alpha$.
\item  $g$ is an upper bound (modulo finite errors) of all $f_i$, $i < \sup \gamma_\alpha$ and for all $\alpha < \beta$, and $m \geq n$, $g(m)\geq f_{\gamma_\alpha}(m)$.   
\end{enumerate}
We need to verify that $\gamma_\beta$ is indeed a good point whenever $\beta$ is a limit ordinal. The club $\{\gamma_\alpha \mid \alpha < \beta\}$ witnesses that this is the case.
\end{proof}
Since $\mu$ is singular, we conclude that $\bS_{sc}$ is $\mu^+$-distributive. Therefore the elements of $\bT_{sc, n}$ are members of the ground model. Let us show now that $\bS_{sc}\ast\bT_{sc, n}$ contains a dense $\mu_n$-directed closed subset. 

Let $D$ be the set of all $\langle \langle d, s\rangle, \check{c}\rangle \in \bS_{sc}\ast\bT_{sc,n}$ such that $c\in V$, and $\max \dom s = \max d = \max c$. $D$ is $\mu_n$-directed closed, since for every sequence of pairwise compatible elements of length $\rho<\mu_n$, $\{ \langle \langle s_i, d_i\rangle, \check{c_i}\rangle \mid i < \rho \}$, has a lower bound. The only thing that we need to verify is that one can add a member of $\prod_{n<\omega} \mu_n$ in the top of the scale $\bigcup_{i < \rho} s_i$ in a way that will make it a good point and this is witnessed by the club $\bigcup_{i<\rho} c_i$.

Let us show that $D$ is dense. Let $p\in\bS_{sc}\ast\bT_{sc, n}$. By extending $p$, if necessary, we may assume that $p = \langle \langle s, d\rangle, \check{c}\rangle$. Using the strategy, we know that we can extend $\langle s, d\rangle$ to a condition $\langle s^\prime, d^\prime\rangle$ such that $\max \dom s^\prime = \max d$. Moreover, we may pick the last element in $s^\prime$ to be above all elements in $s\restriction c$ in all its coordinates, besides the first $n$. Thus, we can extend $c$ to include $\max d$.
\end{proof}
The next two lemmas show that the threading forcing corresponding to the weak square forcing and the very good scale forcing cannot add a new branch to a $P_\kappa \lambda$-tree.

\begin{lemma} \label{pres1} Let $\kappa, \lambda$ be regular cardinals such
that $\kappa$ is not strong limit and $\kappa \leq \lambda$. Let $\nu \leq \mu$
be cardinals with $\kappa < \nu$. Let $\mathbb{S}$ be the forcing for adding a
$\square_{\mu,<\nu}$ sequence. Let $\mathbb{T}$ be the threading forcing with
approximations of order type less than $\kappa$. Then forcing with $\mathbb{T}$
over $V[\mathbb{S}]$ does not add any new branch to a thin $P_\kappa \lambda$
tree.  \end{lemma}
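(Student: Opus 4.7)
The plan is to suppose for contradiction that some condition $(s_0, \dot t_0) \in \bS \ast \bT$ forces $\dot b$ to be a cofinal branch through a thin $(\kappa,\lambda)$-tree $\dot T \in V[\dot G_{\bS}]$ with $\dot b \notin V[\dot G_{\bS}]$, and then to build a splitting tree of conditions that produces too many elements of a single level of $\dot T$. By the preceding claim, $\bS \ast \bT$ has a $\kappa$-directed closed dense subset $\mathbb{U}$, and I may assume $(s_0, \dot t_0) \in \mathbb{U}$. Since $\bS$ inherits $<\kappa$-distributivity from $\mathbb{U}$, the set $P_\kappa \lambda$ is unchanged by forcing with $\bS$, and each level $T_x$ (of size $<\kappa$ by thinness) already lies in $V$.

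Since $\kappa$ is regular and not strong limit, I first choose $\eta < \kappa$ minimal with $2^\eta \geq \kappa$; minimality together with regularity of $\kappa$ then yields $2^{<\eta} < \kappa$. The main body of the argument is to construct by induction on $\sigma$ a splitting tree $\{(s_\sigma, \dot t_\sigma) : \sigma \in 2^{<\eta}\} \subseteq \mathbb{U}$ together with a family $\{x_\sigma : \sigma \in 2^{<\eta}\} \subseteq P_\kappa\lambda$ such that conditions decrease along each branch; limit stages of length $<\eta$ use the $\kappa$-directed closure of $\mathbb{U}$; and at each successor stage the extensions $(s_{\sigma^\smallfrown 0}, \dot t_{\sigma^\smallfrown 0})$ and $(s_{\sigma^\smallfrown 1}, \dot t_{\sigma^\smallfrown 1})$ force distinct values for $\dot b \upharpoonright \check x_\sigma$. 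The successor step relies on the hypothesis that $\dot b$ is forced to be new over $V[\bS]$: below any $\bT$-condition there are two extensions disagreeing on some $\dot b(\alpha)$, and after extending the $\bS$-part to decide them and then passing back into $\mathbb{U}$ by density, the splitting lifts to $\mathbb{U}$.

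To finish, set $X = \bigcup_{\sigma \in 2^{<\eta}} x_\sigma$; since $|2^{<\eta}| < \kappa$, each $x_\sigma \in P_\kappa \lambda$, and $\kappa$ is regular, $X \in P_\kappa\lambda \cap V$. For each $\sigma \in 2^\eta$, the condition $(s_\sigma, \dot t_\sigma)$ forces $\dot b \upharpoonright \check X$ to be a specific element of the ground-model set $T_X$; and for any distinct $\sigma, \tau \in 2^\eta$ with longest common initial segment $\sigma^*$, the two conditions force distinct values at $x_{\sigma^*} \subseteq X$, and hence distinct values for $\dot b \upharpoonright X$. This produces $2^\eta \geq \kappa$ distinct elements of $T_X$, contradicting $|T_X| < \kappa$.

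The hardest step will be the successor stage of the splitting construction: one must carefully lift the $\bT$-splitting of $\dot b$ over $V[\bS]$ into $\mathbb{U}$, possibly using different $\bS$-extensions on the two sides. The feature that makes the final step work is that, although the $\bS$-parts of the $(s_\sigma, \dot t_\sigma)$ at level $\eta$ may be mutually incompatible, the values they force for $\dot b \upharpoonright \check X$ all belong to the single ground-model set $T_X$, which is precisely what allows the final cardinality count to reach a contradiction.
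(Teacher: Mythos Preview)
Your construction of the splitting tree in the $\kappa$-closed dense subset $\mathbb{U}\subseteq\bS\ast\bT$ is fine, but the final counting step has a genuine gap. You assert that the decided values $f_\sigma=\dot b\upharpoonright X$ all lie in ``the single ground-model set $T_X$''. However, $T$ is only an $\bS$-name: $T_X$ depends on the $\bS$-generic, and the $\bS$-coordinates $s_\sigma$ of your level-$\eta$ conditions are, as you yourself note, mutually incompatible. Distributivity of $\bS$ tells you that for each fixed generic $G_\bS$ the level $T_X^{G_\bS}$ happens to be an element of $V$, but it does \emph{not} tell you that $\dot T_X$ is decided by the empty condition. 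So from $(s_\sigma,\dot t_\sigma)\Vdash \check f_\sigma\in\dot T_X$ you only get $f_\sigma\in T_X^{G}$ for generics $G\ni s_\sigma$; since no single $G$ contains more than one $s_\sigma$, you never place $\kappa$ many distinct $f_\sigma$'s into one level, and the cardinality contradiction does not follow. (A $<\kappa$-distributive forcing can easily have an $\bS$-name $\dot A$ with $\Vdash|\dot A|<\kappa$ while $\{a:\exists s\ s\Vdash\check a\in\dot A\}$ has size $\geq\kappa$.)

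The paper's proof is organized precisely to avoid this problem: it keeps a \emph{single} descending chain of $\bS$-conditions $s_\alpha$ (indexed linearly by an enumeration of $2^{<\rho}$) while only the $\bT$-conditions $t_\eta$ branch. At the end it glues everything together into one $\bS$-condition $s_\star$ by placing all $\kappa$ many limit clubs $t_\eta$ into the top $\mathcal C$-cell of the square sequence --- this is exactly where the hypothesis $\kappa<\nu$ is used. Below $s_\star$ every $t_\eta$ is a legitimate $\bT$-condition and all the forced restrictions land in the \emph{same} level $T_{x_\star}$, which yields the contradiction. If you want to salvage your approach you must similarly arrange a single $\bS$-condition compatible with all branches of your splitting tree; merely invoking closure of $\mathbb U$ does not do this.
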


\begin{proof}
Assume otherwise, and let $\dot{b}$ be a name for this branch. 
Let $\rho$ be the least cardinal for which $2^\rho \geq \kappa$. Since $\kappa$ is not 
strong limit, $\rho<\kappa$ and $2^{<\rho} < \kappa$. 
Let $s\in\mathbb{S}$ and $t\in\mathbb{T}$ be arbitrary. 
Since $\dot{b}$ is new, one can extend the condition 
$\langle s, t\rangle \in \mathbb{S}\ast\mathbb{T}$ to pair of
conditions $\langle s^\prime, t_0\rangle$, $\langle s^\prime, t_1 \rangle$
that force different values for $\dot{b}$ at some $x\in P_\kappa \lambda$.

Let $\{\eta_i \mid i < 2^{<\rho}\}$ be an enumeration of all elements of $^{<\rho}2$, such that if $\eta_i \trianglelefteq \eta_j$ then $i \leq j$.
Let us define by induction a sequence of conditions $s_\alpha$, $\alpha < 2^{<\rho}$ and $t_{\eta}$, $\eta\in \,^{<\rho}2$, such that:
\begin{enumerate}
\item For every $\alpha < 2^{<\rho}$, $s_\alpha \Vdash t_{\eta_\alpha}\in\bT$.
\item For every $\eta\in \,^{<\rho}2$ there is $x_\eta\in P_\kappa \lambda$ such that $s_\alpha$ forces $t_{\eta^\smallfrown\langle 0\rangle}, t_{\eta^\smallfrown\langle 1\rangle}$ are stronger than $t_\eta$ and force different values for $\dot b$ in $x_\eta$ where $\alpha$ is such that $\eta_\alpha =\eta$. 
\item If $\eta_\alpha \triangleleft \eta$ and $\eta_\alpha\neq\eta$ then $\max t_\eta \geq \max \dom s_\alpha$.   
\end{enumerate}

For $\eta\in\,^\rho 2$ let $t_\eta = \bigcup_{i<\rho} t_{\eta\restriction i}$. Let $\mathcal{A}$ be a set of $\kappa$ many different elements in $2^{\rho}$.

Let $s_\star$ be the condition $\bigcup_\alpha s_\alpha ^\smallfrown \langle \{t_\eta \mid \eta\in\mathcal{A}\}\rangle$. Let $x_\star = \bigcup_{\eta\in\,^{<\rho} 2} x_{\eta}$. $s_\star\Vdash t_{\eta}\in\bT$ for all $\eta\in\mathcal{A}$, and it forces that if $\eta\neq\eta^\prime$ and $\eta,\eta^\prime\in\mathcal{A}$ then $t_{\eta}, t_{\eta^\prime}$ force different values on $\dot{b}$ in $x_\star$ - but this contradicts the assumption that the tree is thin.   
\end{proof}

\begin{lemma} \label{pres2}
Let $\mu$ be a singular cardinal of countable cofinality, $\lambda_n \to \mu$. Force with $\bS_{sc}$. If $\kappa < \lambda_n$ is not a strong limit, then the forcing $\bT_{sc, n}$ cannot add a new branch to a $P_\kappa \lambda$ tree.
\end{lemma}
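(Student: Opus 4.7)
The strategy is to mimic the proof of \autoref{pres1}, with $\bS_{sc}$ and $\bT_{sc,n}$ in place of $\bS$ and $\bT$. Suppose for contradiction that in $V[\bS_{sc}]$ the forcing $\bT_{sc,n}$ adds a cofinal branch $\dot b$ through a thin $(\kappa,\lambda)$-tree $T$. Let $\rho$ be least with $2^\rho \geq \kappa$; since $\kappa$ is not a strong limit, $\rho < \kappa$ and $2^{<\rho} < \kappa$. Fix an enumeration $\langle \eta_\alpha \mid \alpha < 2^{<\rho}\rangle$ of $2^{<\rho}$ in which initial segments precede their proper extensions.

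We would build, exactly as in the proof of \autoref{pres1}, a descending sequence $\langle s_\alpha \mid \alpha < 2^{<\rho}\rangle$ in $\bS_{sc}$ and a tree $\langle t_\eta \mid \eta \in 2^{<\rho}\rangle$ of $\bS_{sc}$-names, such that $s_\alpha \Vdash t_{\eta_\alpha} \in \bT_{sc,n}$; $s_\alpha$ forces that $t_{\eta_\alpha\smallfrown\langle 0\rangle}, t_{\eta_\alpha\smallfrown\langle 1\rangle}$ both extend $t_{\eta_\alpha}$ and decide $\dot b(x_{\eta_\alpha})$ differently for some $x_{\eta_\alpha} \in P_\kappa\lambda$; and $\max \dom t_\eta \geq \max \dom s_\alpha$ holds whenever $\eta_\alpha \triangleleft \eta$. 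The splitting step is available because $\dot b$ is new. At the top level, for $\eta \in 2^\rho$ put $t_\eta = \bigcup_{i<\rho} t_{\eta\restriction i}$; as in \autoref{pres1} its order type stays below $\mu_n$, so $t_\eta$ is a legitimate $\bT_{sc,n}$-approximation.

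Where the argument diverges from \autoref{pres1} is the closing move. Fix $\mathcal{A} \subseteq 2^\rho$ of size $\kappa$ and let $x_\star = \bigcup_{\eta \in 2^{<\rho}} x_\eta \in P_\kappa\lambda$. We need a single $s_\star \in \bS_{sc}$ extending every $s_\alpha$ such that $s_\star \Vdash t_\eta \in \bT_{sc,n}$ for every $\eta \in \mathcal{A}$. Letting $U = \bigcup_{\eta\in\mathcal{A}} t_\eta$, the plan is to construct the scale part $\langle g_\alpha \mid \alpha \leq \gamma\rangle$ of $s_\star$ by recursion on $\alpha \leq \gamma = \sup U$, extending $\bigcup_\alpha s_\alpha$, and arranging simultaneously that the sequence is $<^*$-increasing (needed for membership in $\bS_{sc}$) and that, for every $\eta\in\mathcal{A}$ and every $\alpha < \beta$ in $t_\eta$, $g_\alpha(m) < g_\beta(m)$ for all $m \geq n$ (needed to have $t_\eta$ be a $\bT_{sc,n}$-condition). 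Since $\kappa < \mu_n$ and each $t_\eta$ has order type at most $\mu_n$, the regularity of each $\mu_m$ with $m \geq n$ ensures a suitable $g_\alpha(m) < \mu_m$ exists at every stage; this simultaneous bookkeeping is the main obstacle. Once $s_\star$ is found, for distinct $\eta, \eta' \in \mathcal{A}$ the conditions $t_\eta, t_{\eta'}$ decide $\dot b \restriction x_\star$ differently, producing $\kappa$-many distinct elements of $T_{x_\star}$ and contradicting thinness.
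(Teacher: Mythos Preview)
Your proposal is correct and follows the paper's approach (the paper simply says the proof is essentially the same as that of \autoref{pres1}). However, you overstate the difficulty of the closing move. By the interleaving in condition~(3), the scale component of $\bigcup_\alpha s_\alpha$ is already defined on all of $[0,\gamma)$, and the property that $\langle g_\alpha(m) : \alpha \in t_\eta\rangle$ is increasing for each $m \geq n$ was secured step by step during the inductive construction, since each $s_\alpha$ already forces the relevant $t_{\eta}$ to lie in $\bT_{sc,n}$. So there is no recursion to run at the end: the only genuinely new datum in $s_\star$ is a single top function $g_\gamma$ dominating $\{g_\alpha(m) : \alpha \in \bigcup_{\eta\in\mathcal{A}} t_\eta\}$ at every coordinate $m \geq n$, and this exists because $|\mathcal{A}| = \kappa < \mu_n$, each $t_\eta$ has order type below $\mu_n$, and each $\mu_m$ is regular. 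You should also specify the $d$-component of $s_\star$: one may take $\bigcup_\alpha d_\alpha \cup \{\gamma\}$, noting that any $t_\eta$ witnesses that $\gamma$ is very good. If anything, the closing step here is easier than in the square case, since one does not need to accommodate $\kappa$-many distinct threads inside a single level of the condition.
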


The proof is essentially the same as the previous lemma.  We are now ready to
complete the proof of the main theorem.

\begin{proof} Let $W$ be the Cummings-Foreman model for the tree property at
$\aleph_n$ for $n \geq 2$.  Let $\mathbb{S}$ be the forcing to add either a
$\square_{\aleph_\omega,<\aleph_\omega}$-sequence or a very good scale of
length $\aleph_{\omega+1}$.  We claim that $W[\mathbb{S}]$ is the desired
model.  Clearly $W[\mathbb{S}]$ has either the appropriate weak square sequence
or a very good scale based on the choice of $\mathbb{S}$.  So it remains to
show that the super tree property holds at $\aleph_{n+2}$ for all $n<\omega$.

Let $n<\omega$ and let $\mathbb{T}_n$ be the appropriate threading forcing so
that $\mathbb{S}*\mathbb{T}_n$ is $\aleph_{n+2}$-directed closed in $W$.  By
Lemma \ref{indestructible}, the super tree property holds at $\aleph_{n+2}$ in
$W[\mathbb{S}*\mathbb{T}_n]$.  So by either Lemma \ref{pres1} or \ref{pres2}
applied with $\kappa = \aleph_{n+2}$, the super tree property holds at
$\aleph_{n+2}$ in $W[\mathbb{S}]$.  \end{proof}

\providecommand{\bysame}{\leavevmode\hbox to3em{\hrulefill}\thinspace}
\providecommand{\MR}{\relax\ifhmode\unskip\space\fi MR }
\providecommand{\MRhref}[2]{%
  \href{http://www.ams.org/mathscinet-getitem?mr=#1}{#2}
}
\providecommand{\href}[2]{#2}

\end{document}